\newtheorem{thm}{Theorem}
\newtheorem{lem}{Lemma}
\newtheorem{prop}{Proposition}
\begin{document}

\title[Conjugate generation of sporadic almost simple groups]{Conjugate generation of sporadic\\ almost simple groups}
\author{Danila O. Revin}%
\address{Danila O. Revin
\newline\indent Sobolev Institute of Mathematics,
\newline\indent 4, Koptyug av.
\newline\indent 630090, Novosibirsk, Russia
} 
\email{revin@math.nsc.ru
\newline\indent ORCID: \href{https://orcid.org/0000-0002-8601-0706}{0000-0002-8601-0706}}

\author{Andrei V. Zavarnitsine}%
\address{Andrei V. Zavarnitsine
\newline\indent Sobolev Institute of Mathematics,
\newline\indent 4, Koptyug av.
\newline\indent 630090, Novosibirsk, Russia
} 
\email{zav@math.nsc.ru
\newline\indent ORCID: \href{https://orcid.org/0000-0003-1983-3304}{0000-0003-1983-3304}
}
\thanks{This research was carried out within the State Contract of the Sobolev Institute of Mathematics (FWNF-2022-0002). The first author is partially supported by the National Natural Science Foundation of China, grant~\#12371021}
\maketitle
\begin{quote}
\noindent{\sc Abstract. } 
As defined by Guralnick and Saxl, given a nonabelian simple group $S$ and its nonidentity automorphism $x$, a natural number $\alpha^{\phantom{S}}_{S}(x)$ is the minimum number of conjugates of $x$ in $\langle x,S\rangle$ that generate a subgroup containing~$S$. In this paper, for every sporadic group $S$ other than the Monster and an automorphism $x$ of $S$ of prime order,   we complete the determination of the precise value of~$\alpha^{\phantom{S}}_{S}(x)$.
\medskip

\noindent{\sc Keywords:} sporadic group, automorphism group,  conjugacy, generators.
 \end{quote}

\section{Introduction}

Let $S$ be a nonabelian simple group (which we identify with $\operatorname{Inn}S$) and let  $x\in \operatorname{Aut}S$ be a nonidentity automorphism (possibly, inner). R.\,Guralnick and J.\,Saxl~\cite{GS} introduced the notation   $\alpha(x)=\alpha^{\phantom{S}}_{S}(x)$ for the minimum number of elements conjugate to $x$ in $G=\langle x,S\rangle$ that generate~$G$. Thus, $\alpha^{\phantom{S}}_{S}(x)$ has a constant value on every conjugacy class $x^S=x^G$ and so may be viewed as a numeric invariant of the class. Even before~\cite{GS} appeared, J.\,Moori \cite{M,M1,M2} called $\alpha^{\phantom{S}}_{S}(x)$ \emph{the rank}\footnote{We note that Moori and some other authors, e.\,g. in \cite{A,A2,AI,AI1,AI2,AIK,AK,AM,BMS,B,BM,BM1,BM2,IA,M,M1,M2}, use the term ``rank'' in the case when $x$ induces on $S$ an inner automorphism.} of class $x^S$. 

Given an arbitrary finite simple group $S$, Guralnick and Saxl find a number $m=m(S)$ such that $\alpha^{\phantom{S}}_{S}(x)\leqslant m$ for every automorphism $x\ne 1$ of $S$. In the case of alternating and classical groups, these estimates are precise. They can be achieved on transpositions for the alternating groups and on transvections or reflections for the classical groups. The classes that are not transpositions, transvections, or reflections are supplied in~\cite{GS} with their own upper bounds. However, these bounds and the estimates for the exceptional and sporadic groups are not always precise. Several relevant remarks and conjectures are formulated in~\cite{GS}. There is also a lengthy list of papers \cite{A,A2,AI,AI1,AI2,AIK,AK,AM,BMS,B,BM,BM1,BM2,IA,M,M1,M2,DMPZ,RZ1,RZ2, WGR}, in which not only the estimates by Guralnick and Saxl are improved, but also some precise bounds are found on the ranks of many conjugacy classes for various simple groups. 

A significant number of these papers go back to the pioneering work by Moori \cite{M,M1,M2} who proved as early as 1993--1994 \cite{M,M1} that if $S=F_{22}$ and ${x\in S}$~is a $3$-transposition then $\alpha^{\phantom{S}}_{S}(x)\in\{5,6\}$.  Subsequently, J.\,Hall and L.\,Soicher \cite[Theorems (1.1)--(1.3)]{HS} classified the groups of $3$-transpositions that may be generated by at most five $3$-transpositions. In particular, it was shown that the Fischer groups $Fi_{22}$ and $Fi_{23}$ cannot be generated by five $3$-transpositions, which confirmed Moori's conjecture \cite{M1} that $\alpha^{\phantom{S}}_{S}(x)=6$ when $S=Fi_{22}$ and $x$~is a $3$-transposition. With his paper \cite{M2}, Moori initiated a systematic calculation of the ranks of inner conjugacy classes of sporadic simple groups. In a series of subsequent articles  by J.\,Moori, F.\,Ali, A.\,B.\,M.\,Basheer, M.\,A.\,F.\,Ibrahim, M.\,A.\,Al-Kadhi, et al. \cite{A,A2,AI,AI1,AI2,AIK,AK,AM,BMS,B,BM,BM1,BM2}, the ranks of inner conjugacy classes were found for many sporadic groups and the Tits group. 

A major breakthrough for sporadic groups was made by L.\,Di Mar\-tino, M.\,A.\,Pel\-legrini, and  A.\,E.\,Zalesskii in \cite{DMPZ}. Their result reduces the determination to a limited number of cases, and, for most inner classes, it even gives the exact value of the rank. 
More precisely, if $S$ is not the Monster $M$ and $x$ is a nonidentity element of $S$ then either \cite[Theorem~3.1]{DMPZ} gives a precise value of $\alpha^{\phantom{S}}_{S}(x)$ or one of the following cases\footnote{As in \cite{DMPZ}, slightly abusing the notation we will often denote a conjugacy class and its representative by the same symbol.} holds:
\begin{itemize}
\item[$\bullet$] $(S,x)=(Fi_{22},3B)$ and $\alpha^{\phantom{S}}_{S}(x)\in\{2,3\}$,
\item[$\bullet$] $(S,x)=(Suz,3A)$ and $\alpha^{\phantom{S}}_{S}(x)\in\{3,4\}$,
\item[$\bullet$]  $(S,x)\in\{(Fi_{22},2A),(Fi_{23},2A)\}$ and $\alpha^{\phantom{S}}_{S}(x)\in\{5,6\}$. 
\end{itemize}     

Also, for $S=M$, \cite[Theorem~3.1]{DMPZ} states that $\alpha^{\phantom{S}}_{S}(x)\in\{2,3\}$ if the order of $x$ is bigger than $2$, and $\alpha^{\phantom{S}}_{S}(x)\in\{3,4\}$ if $x$ has order~$2$.

Although Di\,Martino, Pellegrini, and Zalesskii have left some freedom for the possible values of the ranks of certain inner classes, the parameters
$\alpha^{\phantom{S}}_{S}(x)$ for $x\in S$ have virtually been completely determined by now, except for $S=M$. Thus, in the Fischer groups $F_{22}$ and $Fi_{23}$, the conjugacy classes denoted by $2A$ contain $3$-transpositions, and the above-mentioned result by Hall and Soicher \cite{HS} implies that $\alpha^{\phantom{S}}_{S}(x)=6$ whenever $(S,x)\in\{(Fi_{22},2A),(Fi_{23},2A)\}$. F.\,Ali \cite[Theorem~11]{A} showed that $\alpha^{\phantom{S}}_{S}(x)=3$ for $(S,x)=(Fi_{22},3B)$. We will also prove the following assertion:

\begin{prop}\label{suz3a} If $S=Suz$ and  $x$ is in class $3A$ of $S$ then $\alpha^{\phantom{S}}_S(x)=4$.
\end{prop}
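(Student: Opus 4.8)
The plan is to combine the result of Di\,Martino, Pellegrini and Zalesskii with a classification of the subgroups of $Suz$ generated by few $3A$-elements. By \cite[Theorem~3.1]{DMPZ} we have $\alpha_S(x)\in\{3,4\}$, so it suffices to prove $\alpha_S(x)\ge 4$, i.e. that no three elements of the class $3A$ of $S=Suz$ generate $S$; the bound $\alpha_S(x)\le 4$ can in any case be obtained directly by exhibiting four conjugates of $x$ that generate $S$ (a finite check in a faithful representation). So assume for contradiction that $x_1,x_2,x_3\in 3A$ with $\langle x_1,x_2,x_3\rangle=S$. Conjugating, I may take $x_1=x$ to be a fixed representative of $3A$, and the task becomes to show that there is no pair $(y,z)\in(3A)^2$ with $\langle x,y,z\rangle=S$.

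First I would classify, up to $S$-conjugacy, the subgroups $H=\langle x,y\rangle$ that arise for $y\in 3A$. The $S$-class of the product $xy$ must be a class $C$ meeting $\{ab:a,b\in 3A\}$; the finitely many such $C$ are determined from the ordinary character table of $Suz$ via the usual structure-constant (class-multiplication) formula, and each such $C$ bounds the order of $xy$, which strongly constrains $H$. Feeding this into the known list of maximal subgroups of $Suz$ and descending through that list, one finds that if $H\ne S$ then $H$ lies in one of finitely many $S$-classes of proper subgroups, each contained in an explicitly identified maximal subgroup $M$ of $S$ --- for instance $M\cong G_2(4)$, $M=N_S(\langle x\rangle)$ of shape $3{\cdot}U_4(3){:}2$, or one of the $3$-local maximal subgroups. (In particular, two $3A$-elements never generate $Suz$, consistent with $\alpha_S(x)\ge 3$; the real content is the passage from $3$ to $4$.)

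Next, for each of the finitely many $S$-classes of pairs $(H,z)$ with $H$ as above and $z\in 3A$, I would show $\langle H,z\rangle\ne S$: for the ``large'' subgroups $H$ one uses that $H$ lies in very few maximal subgroups of $S$ and has a transparent normalizer, so that the equation $\langle H,z\rangle=S$ becomes a manageable condition on $z$ which can be excluded, while for the ``small'' $H$ one simply runs through the possible $S$-classes of $z$ relative to $H$ and checks in a concrete copy of $S$ that $\langle H,z\rangle$ is proper. Equivalently, the whole computation can be packaged as the inclusion--exclusion
\[
\#\{(y,z)\in(3A)^2:\langle x,y,z\rangle=S\}=\sum_{H\ni x}\mu(H,S)\,|3A\cap H|^2,
\]
which, since $\mu(H,S)=0$ unless $H$ is an intersection of maximal subgroups of $S$, reduces to a finite sum over the maximal subgroups of $Suz$ containing $x$ and their intersections, and the claim is that this sum vanishes. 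The hard part will be exhaustiveness: one missed $S$-class of pair-subgroups (equivalently, one missed intersection of maximal subgroups) could hide a generating triple, so the bulk of the work --- and the place where computer assistance with the character table and with an explicit faithful representation of $Suz$ is essential --- lies in making this enumeration rigorous. Once that is done, every admissible $\langle H,z\rangle$ is proper, so $\langle x_1,x_2,x_3\rangle\ne S$, contradicting our assumption; hence $\alpha_S(x)\ge 4$, and together with $\alpha_S(x)\le 4$ this gives $\alpha_S(x)=4$.
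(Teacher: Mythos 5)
Your overall frame (take $\alpha_S(x)\in\{3,4\}$ from \cite[Theorem 3.1]{DMPZ} and rule out generation by three $3A$-elements) matches the paper, but the decisive step is missing rather than merely deferred. Everything in your second and third paragraphs is of the form ``one finds \dots'' or ``one checks \dots'': you never establish which subgroups $\langle x,y\rangle$ with $y\in 3A$ actually occur, and your guesses ($G_2(4)$, $3{\cdot}U_4(3){:}2$, $3$-locals) are off the mark --- the crucial fact, verified computationally in the paper, is that two $3A$-elements of $Suz$ generate only $\mathbb{Z}_3$, $\mathbb{Z}_3\times\mathbb{Z}_3$, $A_4$, $A_5$, or $SL_2(3)$. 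More importantly, your plan for the passage from pairs to triples is an open-ended enumeration (all $S$-classes of pairs $(H,z)$, or equivalently the M\"obius sum over intersections of maximal subgroups of $Suz$ containing $x$), and you yourself flag that its exhaustiveness is ``the hard part'' and do not carry it out; in a group of order about $4.5\times 10^{11}$ this is not a routine finite check, so as written the proposal does not prove $\alpha_S(x)\geqslant 4$. (Your M\"obius identity is correct in principle, but it only relocates the difficulty into enumerating, up to conjugacy, all intersections of maximal subgroups containing $x$ together with the sizes $|3A\cap H|$.)

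The paper avoids this entirely by a theoretical input you do not invoke: once the five possible $2$-generated subgroups are known, the class $3A$ falls under Stellmacher's classification \cite{S} of simple groups generated by a conjugacy class of order-$3$ elements with this property, and Section 1 of \cite{S} describes the subgroups generated by \emph{three} such elements, from which it follows that $Suz$ is not among them; hence $\alpha_S(x)>3$ and the DMPZ bound gives $\alpha_S(x)=4$. So the computation needed is only the (small, conjugation-orbit) analysis of pairs, not of triples. To repair your argument you would either need to import a result of Stellmacher's type or actually complete and verify the triple/M\"obius enumeration, neither of which the proposal does. (A minor point: the upper bound $\alpha_S(x)\leqslant 4$ already comes from \cite{DMPZ}, so no explicit generating quadruple is needed.)
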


Proposition~\ref{suz3a} corrects the inaccuracy in the survey papers \cite[Assertion 1 in Section~3.1]{BM} and \cite[Assertion (1) in Section~3]{BM1}, where it is stated, among other things, that $\alpha^{\phantom{S}}_{S}(x)=3$ for $(S,x)=(Suz,3A)$. Neither of \cite{BM,BM1} cites a source of this information.

In summary, the following theorem holds.
\begin{thm}\label{inn_aut} Let $S$~be a sporadic group, other than the Monster, and let $x\in S\setminus\{1\}$. Then 
\begin{itemize}
    \item[]either $|x|> 2$ and  $\alpha^{\phantom{S}}_{S}(x)=2$, except in the following cases:
    \begin{itemize}
         \item[$\bullet$] $(S,x)\in\{(J_2,3A)$, $(HS,4A)$, $(McL,3A)$, $(Ly,3A)$, $(Co_1,3A)$,\\ $(F_{22},3A)$, $(Fi_{22},3B)$,
         $(Fi_{23},3A)$, $(Fi_{23},3B)$, $({Fi_{24}}',3A)$, $({Fi_{24}}',3B)\}$ and $\alpha^{\phantom{S}}_{S}(x)=3$,
     \item[$\bullet$] $(S,x)=(Suz,3A)$ and $\alpha^{\phantom{S}}_{S}(x)=4$,
    \end{itemize}
    \item[]or $|x|= 2$ and $\alpha^{\phantom{S}}_{S}(x)=3$, except in the following cases: 
    \begin{itemize}
              \item[$\bullet$]  $(S,x)\in\{(J_2,2A),(Co_2,2A),(B,2A)\}$ and $\alpha^{\phantom{S}}_{S}(x)=4$, 
         \item[$\bullet$]  $(S,x)\in\{(Fi_{22},2A),(Fi_{23},2A)\}$ and $\alpha^{\phantom{S}}_{S}(x)=6$. 
    \end{itemize}
\end{itemize}
\end{thm}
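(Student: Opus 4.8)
The plan is to derive Theorem~\ref{inn_aut} by assembling \cite[Theorem~3.1]{DMPZ} with three supplements that settle the four pairs $(S,x)$ it leaves undecided. First I would apply \cite[Theorem~3.1]{DMPZ}: for every sporadic $S\ne M$ and $x\in S\setminus\{1\}$ it either outputs an exact value of $\alpha^{\phantom{S}}_{S}(x)$, in which case one checks that these exact values are precisely the ones tabulated in Theorem~\ref{inn_aut} (the generic $2$ for $|x|>2$, the generic $3$ for $|x|=2$, together with the listed exceptions valued $3$, $4$, or $6$), or it returns one of the pairs $(Fi_{22},3B)$ with $\alpha^{\phantom{S}}_{S}(x)\in\{2,3\}$, $(Suz,3A)$ with $\alpha^{\phantom{S}}_{S}(x)\in\{3,4\}$, or $(Fi_{22},2A)$ and $(Fi_{23},2A)$ with $\alpha^{\phantom{S}}_{S}(x)\in\{5,6\}$.

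For $(Fi_{22},3B)$ I would invoke \cite[Theorem~11]{A}, which gives $\alpha^{\phantom{S}}_{S}(x)=3$ and rules out the value $2$. For $(Fi_{22},2A)$ and $(Fi_{23},2A)$ the class $2A$ is the (unique) class of $3$-transpositions of $S$, so a set of conjugates of $x$ generating $S$ is exactly a set of $3$-transpositions generating the $3$-transposition group $S$; by the Hall--Soicher classification \cite[Theorems (1.1)--(1.3)]{HS}, neither $Fi_{22}$ nor $Fi_{23}$ is generated by five $3$-transpositions, so $\alpha^{\phantom{S}}_{S}(x)\geqslant 6$, while \cite[Theorem~3.1]{DMPZ} supplies $\alpha^{\phantom{S}}_{S}(x)\leqslant 6$; hence $\alpha^{\phantom{S}}_{S}(x)=6$. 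The only pair then left is $(Suz,3A)$, which is Proposition~\ref{suz3a}.

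So the genuinely new part is Proposition~\ref{suz3a}, and here is my plan for it. Since \cite[Theorem~3.1]{DMPZ} already gives $\alpha^{\phantom{S}}_{Suz}(3A)\in\{3,4\}$, it suffices to prove $\alpha^{\phantom{S}}_{Suz}(3A)\ne 3$, i.e.\ that no three conjugates of a $3A$-element of $Suz$ generate $Suz$ (the bound $\alpha^{\phantom{S}}_{Suz}(3A)\leqslant 4$ is from \cite[Theorem~3.1]{DMPZ}, and can be re-confirmed by exhibiting four generating conjugates). Fix $x\in 3A$; I would show $\langle x, x^g, x^h\rangle\ne Suz$ for all $g,h\in Suz$. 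The reduction: by $\alpha^{\phantom{S}}_{Suz}(3A)\geqslant 3$ each $\langle x,x^g\rangle$ is proper, which helps classify the possibilities for $\langle x,x^g\rangle$; up to $C_{Suz}(x)$-conjugacy there are finitely many $x^g$, and then up to $C_{Suz}(\langle x,x^g\rangle)$-conjugacy finitely many $x^h$, so only finitely many triples $(x,x^g,x^h)$ need be inspected. For each I would verify that $\langle x,x^g,x^h\rangle$ lies in a maximal subgroup of $Suz$---by an order estimate or by producing an invariant---using the character table of $Suz$ (structure constants and a M\"obius/inclusion--exclusion bookkeeping over the subgroup lattice, in the style of Moori's rank computations), the known list of maximal subgroups ($G_2(4)$, the normalizer $\sim 3{\cdot}U_4(3){:}2$ of a $3A$-subgroup, $J_2{:}2$, and the remaining ones), and a concrete faithful representation of $Suz$ (the $1782$-point permutation representation, or the $12$-dimensional representation of $3{\cdot}Suz$ over $GF(4)$).

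The main obstacle is precisely this lower bound $\alpha^{\phantom{S}}_{Suz}(3A)\geqslant 4$. Unlike the upper bound, which needs only one good quadruple, it demands that \emph{every} triple of $3A$-conjugates fail to generate, and this is not forced by any single structural feature: two $3A$-elements can already generate a large subgroup (for instance one lying in $G_2(4)<Suz$), so adjoining a third gives no automatic collapse, and the vanishing of generating $3A$-triples must be obtained through a genuine, somewhat delicate finite case analysis; the art is in organizing the subgroup-lattice bookkeeping so that the computation stays feasible. Granting Proposition~\ref{suz3a}, everything else in Theorem~\ref{inn_aut} is bookkeeping on top of \cite[Theorem~3.1]{DMPZ}, \cite{HS}, and \cite[Theorem~11]{A}.
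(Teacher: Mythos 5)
Your assembly of the theorem is exactly the paper's: \cite[Theorem~3.1]{DMPZ} for everything it decides, \cite[Theorem~11]{A} for $(Fi_{22},3B)$, Hall--Soicher \cite{HS} for the $3$-transposition classes $(Fi_{22},2A)$ and $(Fi_{23},2A)$, and Proposition~\ref{suz3a} for $(Suz,3A)$. The whole content is therefore in the lower bound $\alpha^{\phantom{S}}_{Suz}(3A)\geqslant 4$, and there your route diverges from the paper's and, as written, is not complete. The paper does \emph{not} enumerate triples of $3A$-conjugates. It first verifies by a small computation (only the $8$ orbits of $C_S(t)$ on class $3A$ need be examined) that every \emph{pair} of $3A$-elements of $Suz$ generates one of $\mathbb{Z}_3$, $\mathbb{Z}_3\times\mathbb{Z}_3$, $A_4$, $A_5$, $SL_2(3)$, and then invokes Stellmacher's classification \cite{S} of simple groups generated by such a class of order-$3$ elements: Section~1 of \cite{S} describes the subgroups generated by three elements of such a class, and $Suz$ is not among them. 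This converts the global statement about all triples into a local statement about pairs plus a citation, which is the key idea your proposal is missing.

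Your alternative --- reduce to finitely many triples via double cosets and test each for generation inside the $1782$-point representation --- is in principle sound, but you yourself flag it as the ``main obstacle'' and leave it as a programme rather than an argument. The difficulty is quantitative: once a pair $\langle x, x^g\rangle$ generates $A_5$ or $SL_2(3)$, the stabilizer $C_{Suz}(x)\cap C_{Suz}(x^g)$ of the ordered pair can be very small, so ``finitely many $x^h$ up to its action'' can mean essentially all $45760$ elements of class $3A$ for each of several pairs, each requiring a membership-in-a-maximal-subgroup or non-generation test in a group of order about $4.5\times 10^{11}$. That is feasible by machine but is a substantially heavier computation than the paper's, and since you do not exhibit the bookkeeping or the invariants that would certify non-generation in each case, the one genuinely new step of the theorem remains unproved in your write-up. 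If you want to avoid Stellmacher, you should at least specify, for each conjugacy type of generating pair, which maximal subgroup of $Suz$ (e.g.\ the $3A$-normalizer $3\,.\,U_4(3)\,.\,2$ or $G_2(4)$) is claimed to contain every extension of that pair by a third $3A$-element, and how that containment is certified.
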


Therefore, as far as the inner conjugacy classes of the sporadic groups $S$, there are some open questions only for the Monster. For the ``outer'' conjugacy classes, i.\,e. those contained in $\operatorname{Aut} S\setminus S$, the information on the values of  $\alpha^{\phantom{S}}_{S}(x)$ has until recently been exhausted by the general estimates of Guralnick and Saxl which, in the case $S={Fi_{24}}'$, for example, gives $\alpha^{\phantom{S}}_{S}(x)\leqslant 8$.

We recall that $\operatorname{Aut} S\ne S$ precisely for the following twelve sporadic groups~$S$:
\begin{equation}\label{list}
    M_{12},M_{22},J_2,J_3,McL,O'N,HS,He,Suz,HN,Fi_{22},{{Fi_{24}}'}. 
\end{equation}
In our recent paper \cite{RZ2}, it was shown that, for $x\in\operatorname{Aut} S\setminus S$, we either have $\alpha^{\phantom{S}}_{S}(x)\leqslant 4$, or $(S,x)=({{Fi_{24}}'},2C)$ and $\alpha^{\phantom{S}}_{S}(x)=5$. The peculiarity of class $2C$ in $\operatorname{Aut} ({Fi_{24}}')=Fi_{24}$ is that it is a class of $3$-transpositions. The fact that $Fi_{24}$ is generated by five $3$-transpositions was established by S.\,Norton \cite{N}, and that $Fi_{24}$ cannot be generated by four $3$-transpositions was proved in \cite[Theorem~(1.1)]{HS}.

As remarked in~\cite{GS}, the values of $\alpha^{\phantom{S}}_{S}(x)$ with $x$ of prime order are of special interest. Indeed, for $x\in\operatorname{Aut} S$ and $y\in\langle x\rangle$, if $\langle y^{g_1},\dots, y^{g_m}\rangle\geqslant S$ then $\langle x^{g_1},\dots, x^{g_m}\rangle\geqslant S$. Therefore, $\alpha^{\phantom{S}}_{S}(x)\leqslant\alpha^{\phantom{S}}_{S}(y)$. Note that an element $y\in\langle x\rangle$ may be chosen so that $|y|$ is prime. 

This work continues \cite{RZ1,RZ2}, where we refine the estimates on $\alpha^{\phantom{S}}_{S}(x)$ for $S$ sporadic and $^2F_4(q^2)'$, where $x$ has prime order. Besides proving Proposition \ref{suz3a}, the aim of the present paper is to indicate, for every sporadic group $S$ with $\operatorname{Aut} S\ne S$ and every $x\in\operatorname{Aut} S\setminus S$ of prime order, a precise value of $\alpha^{\phantom{S}}_{S}(x)$. Since  $|\operatorname{Aut} S:S|= 2$ for the groups in~(\ref{list}), every such $x$ is an involution. Our result is as follows.

\begin{thm}\label{main} Let $S$ be a sporadic group and let $x\in \operatorname{Aut}S\setminus S$ be of order $2$. Then $\alpha^{\phantom{S}}_{S}(x)=3$ except 
in the following cases:
\begin{itemize}
    \item[$(i)$] $(S,x)\in\big\{(M_{22},2B), (HS,2C), (Fi_{22},2D)\big\}$ and $\alpha^{\phantom{S}}_{S}(x)=4$;
    \item[$(ii)$] $(S,x)=({Fi_{24}}',2C)$ and $\alpha^{\phantom{S}}_{S}(x)=5$.
\end{itemize}
\end{thm}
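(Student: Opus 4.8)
The plan is to go through the finitely many conjugacy classes in question one at a time. As stated, $x$ has order $2$, and since $[\operatorname{Aut}S:S]=2$ for each of the twelve groups in~(\ref{list}) --- the only sporadic $S$ with $\operatorname{Aut}S\neq S$ --- it is an outer involution; thus $G:=\langle x,S\rangle=\operatorname{Aut}S$, and, as each $x^{g}$ is outer, a tuple $x^{g_1},\dots,x^{g_m}$ generates a subgroup containing $S$ exactly when it generates~$G$. There is a cheap universal lower bound $\alpha_S(x)\geqslant 3$: a group generated by two conjugates of an involution is dihedral, hence solvable, and therefore cannot contain the nonabelian simple group $S$. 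Combining this with the bound $\alpha_S(x)\leqslant 4$ from~\cite{RZ2} and the fact that the case $({Fi_{24}}',2C)$ --- item~$(ii)$ --- was already settled there, the theorem reduces to two tasks: \textbf{(A)} prove $\alpha_S(x)\leqslant 3$ for every outer involution class not appearing in~$(i)$; \textbf{(B)} prove $\alpha_S(x)\geqslant 4$ for $(S,x)\in\{(M_{22},2B),(HS,2C),(Fi_{22},2D)\}$.

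For \textbf{(A)} I would use the following elementary reduction. If $a,b,c$ are involutions of $G$, then $a$ inverts each of $ab$ and $ac$, so $H:=\langle ab,ac\rangle\leqslant S$ is $\langle a\rangle$-invariant and $\langle a,b,c\rangle=H\langle a\rangle$; since $a$ is outer, $\langle a,b,c\rangle\geqslant S$ if and only if $H=S$. Consequently $\alpha_S(x)\leqslant 3$ if and only if there exist $c_1,c_2,c_3,c_4\in x^{G}$ with $c_1c_2c_3c_4=1$ and $\langle c_1,c_2,c_3,c_4\rangle=G$ (take $c_4=c_3c_2c_1$). The number of quadruples in $(x^{G})^{4}$ with trivial product is the structure constant $\frac{|x^{G}|^{4}}{|G|}\sum_{\chi\in\operatorname{Irr}(G)}\chi(x)^{4}/\chi(1)^{2}$, which is read off the character table of $G$; subtracting the analogous constants computed inside the maximal subgroups $M<G$ (one per conjugacy class) that meet $x^{G}$, each weighted by $[G:N_G(M)]$, bounds from below the number of \emph{generating} quadruples, and one verifies that this bound is positive in every relevant case. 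In the few instances where this estimate is inconclusive, one instead exhibits an explicit generating triple $x^{g_1},x^{g_2},x^{g_3}$ by a direct search in a permutation or matrix representation of $\operatorname{Aut}S$ from the {\sc Atlas}.

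Task \textbf{(B)} is the main obstacle, because it requires excluding \emph{every} generating triple of conjugates of~$x$. By the reduction above this is equivalent to showing $\langle ab,ac\rangle\neq S$ for all $a,b,c\in x^{G}$. I would first determine, from the structure constants $\frac{|x^{G}|^{2}|D|}{|G|}\sum_{\chi}\chi(x)^{2}\overline{\chi(d)}/\chi(1)$, the short list $\mathcal D$ of conjugacy classes $D$ of $S$ (with representative $d$) that arise as products of two conjugates of $x$; then, with $a\in x^{G}$ fixed, show --- using the maximal subgroups of $S$ together with the class fusions $M\hookrightarrow S\hookrightarrow G$ --- that any two elements of $\{ab:b\in x^{G}\}\subseteq\bigcup_{D\in\mathcal D}D$ lie in a common proper subgroup of $S$. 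Equivalently, one computes the number of generating triples, $\sum_{H\leqslant G}\mu(H,G)\,|x^{G}\cap H|^{3}$ (Möbius inversion over the subgroup lattice, where only the maximal subgroups and their relevant intersections contribute), and checks that it vanishes exactly for these three pairs while being positive for every other outer involution class. The bookkeeping of class fusions, and of which pairs of classes in $\mathcal D$ admit a pair generating $S$, is where the computational effort concentrates.

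Finally, for $(S,x)=({Fi_{24}}',2C)$ we invoke~\cite{RZ2}: here $x^{G}$ is the class of $3$-transpositions of $G=Fi_{24}$, which by Norton~\cite{N} is generated by five of them, while by Hall--Soicher~\cite[Theorem~(1.1)]{HS} four do not suffice; hence $\alpha_S(x)=5$, which is item~$(ii)$.
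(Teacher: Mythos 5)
There is a genuine gap, and it sits exactly where the mathematical content of the theorem lies: the lower bound $\alpha^{\phantom{S}}_{S}(x)\geqslant 4$ for $(M_{22},2B)$, $(HS,2C)$, $(Fi_{22},2D)$ (your task (B)). Your reduction to ``$\langle ab,ac\rangle\neq S$ for all $a,b,c\in x^{G}$'' is correct but is only a restatement of the goal, and the two methods you offer for establishing it do not constitute a proof. The first (``show, using maximal subgroups and class fusions, that any two elements of $\{ab:b\in x^G\}$ lie in a common proper subgroup'') is again just the statement to be proved, with no mechanism indicated for why it should hold. The second, the exact count $\sum_{H}\mu(H,G)\,|x^{G}\cap H|^{3}$, does in principle decide the question, but to show non-generation you need the exact value to be zero, so no crude estimate suffices; the assertion that ``only the maximal subgroups and their relevant intersections contribute'' hides the full burden of knowing $\mu(H,G)$ and $|x^G\cap H|$ for every intersection of maximals in groups as large as $Fi_{22}.2$, and none of this is carried out. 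The paper closes this gap with a genuinely different idea: it observes that in these three cases $x^G$ is a class of $4$-transpositions (all products of two class elements have order at most $4$), and then invokes the classification of $3$-generated $4$-transposition groups (Khasraw--McInroy--Shpectorov and Khasraw's thesis), whose only possible nonabelian composition factor is $L_2(7)$; alternatively, for $(HS,2C)$ it gives a short argument via Brauer's trick with the degree-$22$ character. Some input of this kind (a classification theorem, a character-theoretic obstruction, or a completed exhaustive computation) is indispensable, and your proposal supplies none.

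Task (A) is closer in spirit to the paper, but it too is only a plan: the structure-constant-minus-maximal-subgroups estimate is a standard strategy, yet you defer the decisive step (``one verifies that this bound is positive in every relevant case'') and provide a fallback computer search without exhibiting anything; note also that the subtraction must be done with the fused classes of $x^G$ inside each maximal subgroup, not with ``the analogous constant'' of $M$ itself. The paper instead gets $\alpha^{\phantom{S}}_{S}(x)\leqslant 3$ cheaply by combining known generation results (Guralnick--Kantor's Proposition 6.2 and Breuer--Guralnick--Kantor's Table 9: for suitable $s\in S$ or $x\in nX$ one already has $S\leqslant\langle g,s\rangle$ or $G=\langle g,x\rangle$) with a single nonzero class multiplication coefficient, plus short maximal-subgroup order/element-order arguments in the remaining cases. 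Finally, a small slip: your claimed equivalence of $\alpha^{\phantom{S}}_{S}(x)\leqslant 3$ with the existence of a generating quadruple from $x^G$ with trivial product fails in the ``only if'' direction, since $c_3c_2c_1$ need not lie in $x^G$; only the direction you actually use is valid. Your treatment of $({Fi_{24}}',2C)$ via \cite{RZ2}, Norton and Hall--Soicher, and the lower bound $\alpha^{\phantom{S}}_{S}(x)\geqslant 3$, agree with the paper.
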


We note that, in item $(i)$, the group $\operatorname{Aut}S$ is a {\em $4$-transposition group} with respect to the class $D=x^S$, i.\,e. $\operatorname{Aut}S$ is generated by the involutions in $D$ and the product of every two such involutions has order at most $4$. Other known examples of sporadic $4$-transposition groups 
$S$ with  $\alpha^{\phantom{S}}_{S}(x)=4$ are $Co_2$ and $B$, where $x\in D=2A$ in both cases, see \cite[Theorem 3.1]{DMPZ}.

\medskip {\em Remark.\/}
After the original version of this preprint had been published, we received a letter from Prof. J.\,M\"uller dated May 15, 2025 mentioning the paper \cite{fmbw} of 2019, where the values of $\alpha^{\phantom{S}}_{S}(x)$ were determined independently using other methods for all sporadic groups $S$ and all nonidentity $x\in \operatorname{Aut}S$. Theorems~\ref{inn_aut} and~\ref{main} of the present paper are particular cases of the results in \cite{fmbw} whose existence we had had no knowledge of. We are thankful to Prof. M\"uller for this information and for pointing out an inaccurate citation of \cite{DMPZ} in our original statement of Theorem~\ref{inn_aut}.

\section{Preliminaries}

We consider finite groups only.

The following lemma is known as Brauer's trick, see \cite{Brauer}.

\begin{lem}\label{uf}
Let $A$ and $B$ be subgroups of a group $G$. If there is a non-principal ordinary irreducible character $\chi$ of $G$ such that
$$
 ( \chi_A, 1_A ) + ( \chi_B, 1_B ) > ( \chi_{A\cap B}, 1_{A\cap B} )
$$
then $\langle A, B \rangle$ is  a proper subgroup of $G$.
\end{lem}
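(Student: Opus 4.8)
The plan is to pass from characters to the underlying module and to argue with fixed-point spaces. Let $V$ be a complex $G$-module affording $\chi$, and for any subgroup $H\leqslant G$ write $V^H=\{v\in V: hv=v \text{ for all } h\in H\}$ for its fixed subspace. The basic dictionary is that the averaging operator $e_H=\frac{1}{|H|}\sum_{h\in H}h$ acts on $V$ as the projection onto $V^H$, so that $\dim V^H=\operatorname{tr} e_H=\frac{1}{|H|}\sum_{h\in H}\chi(h)=(\chi_H,1_H)$. Thus each of the three inner products in the hypothesis records the dimension of a fixed space, and the whole statement becomes a linear-algebra inequality about these subspaces.

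Next I would record two elementary containments. Since a vector fixed by all of $\langle A,B\rangle$ is exactly a vector fixed by $A$ and by $B$, one has $V^{\langle A,B\rangle}=V^A\cap V^B$. On the other hand, as $A\cap B$ is contained in both $A$ and $B$, both $V^A$ and $V^B$ lie inside $V^{A\cap B}$, whence $V^A+V^B\subseteq V^{A\cap B}$. The heart of the argument is then the dimension count
\[
\dim(V^A\cap V^B)=\dim V^A+\dim V^B-\dim(V^A+V^B)\geqslant \dim V^A+\dim V^B-\dim V^{A\cap B},
\]
where the inequality uses the containment just noted. Translating back through the dictionary, the right-hand side equals $(\chi_A,1_A)+(\chi_B,1_B)-(\chi_{A\cap B},1_{A\cap B})$, which is strictly positive by hypothesis. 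Hence $V^{\langle A,B\rangle}=V^A\cap V^B\neq 0$.

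It remains to turn a nonzero fixed space into the assertion that $\langle A,B\rangle$ is proper. I would argue by contradiction: if $\langle A,B\rangle=G$, then $V^G\neq 0$. But $V^G$ is a $G$-submodule of the irreducible module $V$, so it is either $0$ or all of $V$; the latter would force $G$ to act trivially on $V$, making $\chi$ a multiple of the principal character, contrary to the assumptions that $\chi$ is irreducible and non-principal. Thus $V^G=0$, a contradiction, and $\langle A,B\rangle$ must be a proper subgroup of $G$.

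Since every step is either a standard identity or elementary linear algebra, I do not anticipate a genuine obstacle. The only points requiring care are the dictionary $\dim V^H=(\chi_H,1_H)$ and the correct direction of the containment $V^A+V^B\subseteq V^{A\cap B}$, which is precisely what makes the dimension inequality run the right way and yields a \emph{lower} bound on $\dim(V^A\cap V^B)$.
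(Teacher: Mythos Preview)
Your argument is correct: the dictionary $\dim V^H=(\chi_H,1_H)$, the identity $V^{\langle A,B\rangle}=V^A\cap V^B$, the containment $V^A+V^B\subseteq V^{A\cap B}$, and the resulting dimension inequality combine exactly as you describe, and the contradiction with irreducibility and non-principality of $\chi$ is the right finish.

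The paper does not actually prove this lemma; it merely quotes it as ``Brauer's trick'' with a reference to Brauer's 1963 survey. Your write-up is the standard proof of this fact, so there is nothing to compare beyond noting that you have supplied what the paper leaves to citation.
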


It is known from character theory that,
given elements $a,b$ and $c$ of a group $G$, the number $\operatorname{m}(a,b,c)$ of pairs
$(u,v)$, where $u$ is conjugate to $a$, $v$ is conjugate to $b$, and
$uv=c$, can be found from the character table using the formula
$$
\mathrm{m}(a,b,c)=\frac{|G|}{|\operatorname{C}_G(a)||\operatorname{C}_G(b)|}\sum\limits_{\chi\in\mathrm{Irr}(G)}\frac{\chi(a)\chi(b)\overline{\chi(c)}}{\chi(1)},
$$
see \cite[Exercise~(3.9), p.~45]{Isaacs}. For our purposes, the value of $\mathrm{m}(a,b,c)$ can thus be determined either from the Atlas \cite{atlas} or using the computer algebra system \texttt{GAP}~\cite{GAP} which has the relevant built-in function  \texttt{ClassMultiplicationCoefficient()}.

\begin{lem}\label{GuKa}
Let $S$ be a sporadic simple group and let $nX$ be a conjugacy class of $G=\operatorname{Aut} S$.
\begin{itemize}
\item[$(i)$] Assume that $p$ is a prime such that $(S,nX,p)$ is one of 
\begin{multline*}
(M_{12},2C,11),(M_{22},2C,11),(J_2,2C,7),(J_3,2B,19),(O'N,2B,31),\\ 
(HS,2D,11),(HN,2C,19).
\end{multline*}
Then
\begin{itemize}
    \item a Sylow $p$-subgroup of $S$ is cyclic of order~$p$;
     \item for every nonidentity $g\in G$, there exists $s\in S$ of order~$p$ such that $S\leqslant \langle g,s\rangle$;
     \item $\operatorname{m}(nX,nX,pA)>0$.
\end{itemize}
In particular, 
if $x\in nX$ then $\alpha^{\phantom{S}}_S(x)\leqslant 3$. 

\item[$(ii)$] Assume that $(S,nX)$ is one of
$$(Fi_{22},16AB),({Fi_{24}}',29AB),(He,14CD),(Suz,14A).$$ 
Then, for every $g\in G\setminus S$, there exists $x\in nX$ such that $G=\langle g,x\rangle$. 
\end{itemize}
\end{lem}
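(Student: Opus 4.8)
The plan is to treat the two parts separately, both by explicit computation with the character tables available in \texttt{GAP}'s character table library, but with slightly different goals. For part $(i)$, fix one of the listed triples $(S,nX,p)$. The claim that a Sylow $p$-subgroup of $S$ is cyclic of order $p$ is immediate from the Atlas: in each case $p$ divides $|S|$ to the first power only, so I would simply record $|S|_p=p$. The second bullet---that every nonidentity $g\in G=\operatorname{Aut}S$ lies in a subgroup $\langle g,s\rangle\geqslant S$ for some order-$p$ element $s\in S$---is the substantive point, and here I would invoke the known classification of maximal subgroups of $G$ (Atlas data): it suffices to check that no maximal subgroup $M$ of $G$ with $M\cap S$ a proper subgroup of $S$ contains, simultaneously, a conjugate of every nonidentity element and an element of order $p$ forcing $M$ up to $S$; concretely, one checks that the only maximal subgroups of order divisible by $p$ are those that fail to contain a full class of some fixed small order, so that for any $g\ne 1$ one can choose $s$ of order $p$ with $\langle g,s\rangle$ not contained in any maximal subgroup. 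This is a finite case check over the (short) list of maximal subgroups in each of the seven cases. The third bullet, $\operatorname{m}(nX,nX,pA)>0$, is a direct evaluation of the class multiplication coefficient via the displayed formula (or \texttt{ClassMultiplicationCoefficient()} in \texttt{GAP}); one reads off that it is strictly positive. Finally, the ``in particular'' clause: given $x\in nX$, pick any $1\ne g\in S$; by the second bullet there is $s\in S$ of order $p$ with $\langle g,s\rangle\geqslant S$; since $\operatorname{m}(nX,nX,pA)>0$, we may write $s=uv$ with $u,v\in nX$ (conjugate to $x$ in $G$), and since $p$ is odd $\langle u,v\rangle=\langle u,v\rangle\ni s$ together with, say, a third conjugate of $x$ chosen to adjust the coset, yields $S\leqslant\langle x^{g_1},x^{g_2},x^{g_3}\rangle$ for suitable $g_i$; hence $\alpha^{\phantom{S}}_S(x)\leqslant 3$. (One has to be mildly careful that when $nX\subseteq\operatorname{Aut}S\setminus S$ the product of two conjugates of $x$ lies in $S$, which is exactly why $pA$ is an $S$-class and why the argument produces a subgroup containing $S$ rather than all of $G$; this is the point I would state most carefully.)

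For part $(ii)$, fix one of the four pairs $(S,nX)$ and an arbitrary $g\in G\setminus S$. I want to show $G=\langle g,x\rangle$ for some $x\in nX$. The natural tool is a counting/character-theoretic argument in the spirit of Brauer's trick (Lemma~\ref{uf}) combined with the class multiplication coefficients: I would show that the number of pairs $(x,h)$ with $x\in nX$, $h$ in the $G$-class of $g$, and $xh$ of some fixed order $n'$ (chosen so that $\langle x,h\rangle$ is forced to be all of $G$) exceeds the contribution coming from all proper subgroups of $G$ containing a conjugate of $g$. Concretely, since $nX$ splits over $\operatorname{Aut}S\setminus S$ into the two algebraically conjugate classes written $16AB$, $29AB$, $14CD$, or $14A$ (the notation with two capital letters indicating a pair of Atlas classes fused under the outer automorphism or under Galois action), one uses that an element of such a large order, together with a suitable outer element, cannot be contained in any maximal subgroup of $G$: I would go down the Atlas list of maximal subgroups $M<G$ and check that either $|M|$ is not divisible by the relevant prime (e.g.\ $29$ for ${Fi_{24}}'$), or $M$ does not meet the outer coset, or the element orders present in $M$ rule out the chosen product order. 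Then a positivity estimate for $\operatorname{m}$ (again read from \texttt{GAP}), together with the bound $\sum_{M}\operatorname{m}_M < \operatorname{m}_G$ over the finitely many relevant maximal subgroups, gives the existence of the required $x$.

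The main obstacle, in both parts, is the second bullet of $(i)$ and its analogue in $(ii)$: the claim that a single well-chosen element of prime order (resp.\ of the specified large order) always suffices to escape every maximal subgroup. This is not a formal consequence of the character table alone; it rests on the full list of maximal subgroups of $\operatorname{Aut}S$ and on knowing, for each maximal $M$, enough about its fusion and element orders. I would organize this as a uniform lemma: for $s\in S$ of order $p$ with $\langle s\rangle$ self-centralizing (which holds because the Sylow $p$-subgroup is cyclic of order $p$ and, in these seven groups, self-centralizing), the subgroups of $G$ containing $s$ are severely restricted---essentially $N_G(\langle s\rangle)$ and the overgroups of $S$---so that for any $g\ne 1$ outside the (small) normalizer one immediately gets $\langle g,s\rangle\geqslant S$, and the few remaining positions of $g$ inside $N_S(\langle s\rangle)$ are handled by choosing a different conjugate of $s$. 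The verification that this covers all $g\ne 1$ is the delicate, case-by-case heart of the argument; everything else is either an Atlas lookup or a one-line \texttt{GAP} computation.
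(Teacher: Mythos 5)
Your overall architecture (Atlas lookups for the Sylow structure, \texttt{GAP} for the class multiplication coefficients, maximal-subgroup analysis for the generation statements) is reasonable, but it diverges from the paper at exactly the point where the real content lies, and there you leave genuine gaps. The paper does not prove the second bullet of $(i)$ or part $(ii)$ at all: it cites \cite[Proposition~6.2]{GK} for the former and \cite[Section~4.8, Table~9]{BGK} for the latter. These are published ``uniform spread'' results whose proofs are themselves substantial case analyses over maximal subgroups and fixed-point ratios. Your proposal correctly identifies that such an analysis is what is needed, and your ``uniform lemma'' about self-centralizing cyclic Sylow $p$-subgroups is the right heuristic, but you explicitly defer the verification (``the delicate, case-by-case heart of the argument''), so the two claims that carry all the weight of the lemma remain unproved in your write-up. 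In addition, your description of part $(ii)$ misreads the classes: $16AB$, $29AB$, $14CD$, $14A$ are classes of elements of $S$ (pairs of $S$-classes fused in $\operatorname{Aut}S$), not classes in the outer coset; the statement to prove is that such an element together with \emph{any} outer element $g$ generates $G$, which is how the lemma is later combined with $\operatorname{m}(nX,nX,\cdot)>0$ for an outer class $nX$.

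There is also a logical slip in your deduction of the ``in particular'' clause. You apply the second bullet to an arbitrary $1\ne g\in S$ and then try to repair the argument with ``a third conjugate of $x$ chosen to adjust the coset''; that step does not produce a subgroup containing $S$ generated by conjugates of $x$. The correct (and the paper's) deduction is to apply the second bullet with $g=x$ itself: since $x\ne 1$, there is $s\in S$ of order $p$ with $S\leqslant\langle x,s\rangle$; replacing $s$ by a conjugate we may take $s\in pA$ because the Sylow $p$-subgroup is cyclic of order $p$; then $\operatorname{m}(nX,nX,pA)>0$ gives $s=x_1x_2$ with $x_1,x_2\in nX$, whence $S\leqslant\langle x,x_1,x_2\rangle$ and $\alpha^{\phantom{S}}_S(x)\leqslant 3$. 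I recommend either citing the Guralnick--Kantor and Breuer--Guralnick--Kantor results or actually carrying out the maximal-subgroup computation you sketch; as it stands, the proposal establishes only the first and third bullets of $(i)$.
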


\begin{proof} $(i)$ The existence of $s$ as stated is proven in \cite[Proposition~6.2]{GK}.
This implies that if $x\in nX$ then $S\leqslant \langle x,s\rangle$ 
for some $s\in S$ of order~$p$. Since $\langle s\rangle$ is a Sylow $p$-subgroup of $S$, we may assume that $s\in pA$. 
Using \cite{GAP} we check that $\operatorname{m}(nX,nX,pA)>0$
in all cases under consideration, see \cite{RZ3} for a detailed \texttt{GAP} code. This implies that $s=x_1x_2$ for some $x_1,x_2\in nX$. Therefore, $S\leqslant \langle x,x_1,x_2 \rangle$ and so $\alpha^{\phantom{S}}_S(x)\leqslant 3$. 

$(ii)$ See \cite[Section~4.8, Table~9]{BGK}.
\end{proof}

\section{Proof of main results}\label{proof}

We begin by proving Proposition~{\rm\ref{suz3a}} which is the final clarifying step in the proof of Theorem \ref{inn_aut}.

\begin{proof}[Proof of Proposition~{\rm\ref{suz3a}}.] Conjugacy class $3A$ of $S$ has the following property. 
Every pair of elements in $3A$ generates a subgroup isomorphic to one of the groups
\begin{equation}\label{5types}
\mathbb{Z}_3,\ \mathbb{Z}_3\times \mathbb{Z}_3,\ A_4,\ A_5,\ \text{or}\ SL_2(3).
\end{equation}
This fact is known and can also be checked using \cite{GAP}. A detailed \texttt{GAP} code for such a verification is available in \cite{RZ3}. We briefly outline here the idea behind it. $S$ can be constructed from two explicit {\em standard} generators $a$ and $b$, where $a$ is in class $2B$, $b$ is in class $3B$, $ab$ has order $13$ and $(ab)^2b$ has order $15$, see \cite{AtlRep}. We check that $t=((ab^2)^3ab)^7$ has order $3$
and the centralizer of $t$ in $S$ has order $9797760$, i.\,e. $t$ is in class $3A$. This enables us to construct this class using $t$ as a representative. 
$S$ acts by conjugation on pairs of elements in $3A$.
It suffices to consider the subgroups of $S$ generated by orbit representatives under this action. Note that these orbits are naturally in a one-to-one correspondence with the orbits of $C_S(t)$ on class $3A$. We find that there are $8$ such orbits and, if we exclude the ones containing inverses, there are only $5$ orbits whose representatives generate precisely the $5$ types of subgroups (\ref{5types}).

The finite simple groups generated by a conjugacy class $D$ of elements of order~$3$ such that every two members of $D$ generates one of the groups (\ref{5types}) were classified in~ \cite{S}. The structure of subgroups generated by three elements in $D$ was clarified in \cite[Section 1]{S}, whence it follows that $Suz$ cannot be such a subgroup. In particular, $\alpha^{\phantom{S}}_S(x)>3$ and so $\alpha^{\phantom{S}}_S(x)=4$ by \cite[Theorem 3.1]{DMPZ}.

\end{proof}

Taking into account the information about conjugacy classes from the introduction and the Atlas \cite{atlas}, 
Theorem~\ref{main} amounts to the following assertion which we are going to prove.

\begin{prop}\label{main1} Let $S$ be a sporadic group and let $x$
be a representative of conjugacy class $nX$ of $G=\operatorname{Aut} S$.
\begin{itemize}
    \item[$(i)$] If $(S,nX)$ is one of
    \begin{multline*}
    (M_{12},2C),(M_{22},2C),(J_{2},2C),(J_{3},2B),(McL,2B),(O'N,2B),(HS,2D),\\
    (He,2C),(Suz,2C),(Suz,2D),(HN,2C),(Fi_{22},2E),(Fi_{22},2F),({Fi_{24}}',2D)
    \end{multline*}
    then $\alpha^{\phantom{S}}_{S}(x)=3$;
    \item[$(ii)$] If $(S,nX)$ is $(M_{22},2B)$, $(HS,2C)$, or $(Fi_{22},2D)$ then 
    $\alpha^{\phantom{S}}_{S}(x)=4$.
\end{itemize}
\end{prop}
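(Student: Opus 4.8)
The plan is to treat the two items of Proposition~\ref{main1} by quite different tools, splitting each into a lower bound and an upper bound on $\alpha^{\phantom{S}}_{S}(x)$.

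For item $(i)$, the upper bound $\alpha^{\phantom{S}}_{S}(x)\leqslant 3$ should in most cases follow directly from the machinery already assembled in Lemma~\ref{GuKa}. Indeed, the classes $(M_{12},2C)$, $(M_{22},2C)$, $(J_2,2C)$, $(J_3,2B)$, $(O'N,2B)$, $(HS,2D)$, $(HN,2C)$ are precisely the ones listed in part~$(i)$ of that lemma, so there $\alpha^{\phantom{S}}_{S}(x)\leqslant 3$ is immediate. For the remaining classes in item~$(i)$ — namely $(McL,2B)$, $(He,2C)$, $(Suz,2C)$, $(Suz,2D)$, $(Fi_{22},2E)$, $(Fi_{22},2F)$, $({Fi_{24}}',2D)$ — I would reproduce the same three-step scheme: first invoke \cite[Proposition~6.2]{GK} (or, where applicable, \cite[Section~4.8, Table~9]{BGK} as in Lemma~\ref{GuKa}$(ii)$) to find, for every nonidentity $g\in G$, an element $s\in S$ of suitable small order with $S\leqslant\langle g,s\rangle$; then check via a class multiplication coefficient that $s$ is a product of two elements of $nX$; and conclude $\alpha^{\phantom{S}}_{S}(x)\leqslant 3$. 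Where an appropriate $s$ of prime order is not available, one may instead use an element of order a product of two primes together with two applications of the Guralnick--Kantor-type result, or argue directly with the $(2,3)$- or $(2,p)$-generation data for the group in question. The matching lower bound $\alpha^{\phantom{S}}_{S}(x)\geqslant 3$ is the essential point: since $x\notin S$, any two conjugates $x^{g_1},x^{g_2}$ lie in the same coset $Sx$, so their product lies in $S$ and $\langle x^{g_1},x^{g_2}\rangle$ is generated by one element of $Sx$ together with one element of $S$, i.e. is contained in $\langle x^{g_1}\rangle\ltimes\langle(x^{g_1}x^{g_2})^{\langle x^{g_1}\rangle}\rangle$ — more usefully, $\langle x^{g_1},x^{g_2}\rangle\cap S=\langle x^{g_1}x^{g_2}\rangle^{\langle x^{g_1}\rangle}$ is generated by the conjugates of a single element, so it is a proper subgroup of $S$ unless $S$ has a conjugacy class whose class multiplication square covers a generating element; one shows this fails here, for instance via Lemma~\ref{uf} (Brauer's trick) with a well-chosen irreducible character $\chi$ of $G$, or simply by scanning the possible orders of $x^{g_1}x^{g_2}$ from the Atlas and observing that no such product together with $x^{g_1}$ can generate $G$ (two-generation would force $S$ to be generated by an involution and a single further element of a restricted class, contradicting the structure of maximal subgroups). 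Thus $\alpha^{\phantom{S}}_{S}(x)=3$.

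For item $(ii)$ the situation is reversed: the upper bound $\alpha^{\phantom{S}}_{S}(x)\leqslant 4$ is easy — four conjugates of an outer involution generate a subgroup of the form $\langle x^{g_1}\rangle\ltimes\langle x^{g_1}x^{g_2},\,x^{g_1}x^{g_3},\,x^{g_1}x^{g_4}\rangle$, and one exhibits explicit $g_i$ (or counts generating $4$-tuples via a character-sum formula for the number of solutions of $x^{g_1}x^{g_2}x^{g_3}x^{g_4}=1$ lying in generating position) showing that $\langle x^{g_1},\dots,x^{g_4}\rangle=G$ is possible. The hard part is the lower bound $\alpha^{\phantom{S}}_{S}(x)\geqslant 4$, i.e. showing that no three conjugates of $x$ suffice. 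Here the key structural fact, flagged in the remark after Theorem~\ref{main}, is that $G=\operatorname{Aut}S$ is a $4$-transposition group with respect to $D=x^S$: every product of two of these involutions has order dividing $4$. Consequently, given three conjugates $x^{g_1},x^{g_2},x^{g_3}$, the subgroup they generate contains the $2$-local structure forced by three $4$-transpositions, and I would argue — either by a Fischer-style analysis of the subgroups of $G$ generated by three $\{3,4\}$-transpositions, or by a direct computation in \texttt{GAP} enumerating the orbits of $C_G(x)$ (equivalently $G$) on ordered triples from $D$ and checking the subgroup each representative generates — that such a subgroup is always proper in $G$ (for $M_{22}$ and $HS$ this is a short finite check; for $Fi_{22}$ one leans on the classification of $3$- and $4$-transposition groups, cf. the Hall--Soicher results cited in the introduction, or on the explicit standard-generator computation as in the proof of Proposition~\ref{suz3a}). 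The main obstacle throughout is this $(ii)$ lower bound for $Fi_{22}$: the group is large, the transposition class is big, and a naive orbit enumeration on triples is expensive, so one wants to reduce to orbits of $C_G(x)$ on pairs (which correspond to the possible isomorphism types of $\langle x,x^g\rangle$, a dihedral or small $2$-group), then for each such pair check which third transpositions could enlarge it to all of $G$ — essentially a bounded search once the two-generated subgroups are classified. I would organize the $Fi_{22}$ case around that reduction and cite \cite{HS} and \cite{S} for the transposition-group structure wherever it saves a computation.
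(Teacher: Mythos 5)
Your overall architecture (Lemma~\ref{GuKa} for seven of the classes in $(i)$, class-multiplication coefficients plus subgroup analysis for the rest, and the $4$-transposition structure for $(ii)$) matches the paper, but two points need repair. First, your lower bound $\alpha^{\phantom{S}}_{S}(x)\geqslant 3$ in item $(i)$ is vastly overengineered and, as written, partly wrong: two conjugates of $x$ are two involutions, so they generate a dihedral group, which is solvable and therefore cannot contain the nonabelian simple group $S$. That one sentence is the whole argument. There is no need for Brauer's trick or a scan of maximal subgroups here, and your parenthetical claim that ``two-generation would force $S$ to be generated by an involution and a single further element\dots contradicting the structure of maximal subgroups'' is false in general (sporadic groups \emph{are} generated by an involution together with one further element; that is exactly what Lemma~\ref{GuKa} exploits). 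For the upper bounds in $(i)$ not covered by Lemma~\ref{GuKa}, the paper does not re-run the Guralnick--Kantor spread argument; instead it computes two successive coefficients such as $\operatorname{m}(2B,2B,14A)$ and $\operatorname{m}(2B,14A,22A)$ to produce three conjugates generating a subgroup $H\not\leqslant S$ whose order is divisible by two chosen primes (or which contains an element of a chosen order), and then checks from the lists of maximal subgroups of $\operatorname{Aut}S$ that no proper subgroup qualifies. Your plan is compatible with this but would need to be made concrete case by case.

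Second, and more seriously, the lower bound in $(ii)$ is the genuinely hard step and your proposal leaves it open for $Fi_{22}$. The decisive ingredient in the paper is the classification of $3$-generated $4$-transposition groups (Khasraw--McInroy--Shpectorov \cite{KMS} and Khasraw's thesis \cite{K}): the only nonabelian composition factor such a group can have is $L_2(7)$, so none of $M_{22}$, $HS$, $Fi_{22}$ can occur and three transpositions never suffice. The Hall--Soicher results you invoke concern $3$-transposition groups and do not apply to these classes, and the direct orbit enumeration on triples in $Fi_{22}.2$ that you fall back on is exactly the computation you admit is expensive; without the $4$-transposition classification your argument for $Fi_{22}$ does not close. (The upper bound $\alpha^{\phantom{S}}_{S}(x)\leqslant 4$ in $(ii)$ is quoted from \cite[Theorem~2]{RZ2} rather than re-proved; the paper also gives an independent verification for $(HS,2C)$ via Brauer's trick with the degree-$22$ character, which is close in spirit to the character-theoretic machinery you propose, but applied to the lower bound in $(ii)$ rather than in $(i)$.)
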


\begin{proof} Since $x$ has order $2$, we have $\alpha^{\phantom{S}}_S(x)\geqslant 3$, because two involutions always generate a solvable subgroup. In view of Lemma~\ref{GuKa}$(i)$, it remains to consider the cases listed below. A \texttt{GAP} code for calculating the relevant constants $\operatorname{m}(a,b,c)$ is available in~\cite{RZ3}. We will use without further reference the information from~\cite{atlas} on the maximal subgroups of sporadic groups and their automorphism groups.

$\bullet$ Let $(S,nX)=(McL,2B)$. In this case, 
we have $\operatorname{m}(2B,2B,14A)=14$ and $\operatorname{m}(2B,14A,22A)=16236$. Consequently, there exist three elements in class $2B$ that generate a subgroup $H$ not included in $S$ such that $|H|$ is divisible by $7$ and $11$. But there is no maximal subgroup of $G$ that is not included in $S$ and whose order is divisible by both $7$ and~$11$. 
Therefore, these three elements in $2B$ generate $G$ and so $\alpha^{\phantom{S}}_{S}(x)=3$.
  
$\bullet$ Let $(S,nX)=(He,2C)$.
We have $\operatorname{m}(2C,2C,14CD)=14$. (For $He.2$, class $14CD$ in the notation of \cite{atlas} is class $14B$ in \cite{GAP}.) This, together with Lemma~\ref{GuKa}$(ii)$, implies that there are three elements in $2C$ that generate $G$. Thus, $\alpha^{\phantom{S}}_{S}(x)=3$.

$\bullet$ Let $(S,nX)=(Suz,2C)$.
We have $\operatorname{m}(2C,2C,7A)=7$ and $\operatorname{m}(2C,7A,22A)=3630$. Therefore, there are three elements in~$2C$ that generate a subgroup $H$ which is not included in $S$ and has order divisible by $7$ and $11$.  But the order of every maximal subgroup of $Suz.2$ not included in $S$ is not divisible by either $7$ or $11$. Thus, $H=G$ and $\alpha^{\phantom{S}}_{S}(x)=3$.
 
$\bullet$ Let $(S,nX)=(Suz,2D)$. Then $\operatorname{m}(2D,2D,14A)=14$. By Lemma~\ref{GuKa}$(ii)$, there are three elements in $2C$ that generate $G$. Thus, $\alpha^{\phantom{S}}_{S}(x)=3$.

$\bullet$ Let $(S,nX)=(Fi_{22},2E)$.
Then $\operatorname{m}(2E,2E,16AB)=16$. (For $Fi_{22}.2$, class $16AB$ in \cite{atlas} is class $16A$ in \cite{GAP}.) As in the previous case, $\alpha^{\phantom{S}}_{S}(x)=3$ by Lemma~\ref{GuKa}$(ii)$.

$\bullet$ Let $(S,nX)=(Fi_{22},2F)$.
In this case, we have $\operatorname{m}(2F,2F,11A)=11$ and $\operatorname{m}(2F,11A,42A)=1867488$. Hence, there exist three elements in~$2F$ that generate a subgroup $H$ not included in $S$ such that $|H|$ is divisible by $7$ and $11$ and $H$ contains an element of order~$21$. Every maximal subgroup of $Fi_{22}.2$ of order divisible by both $7$ and $11$ not included in $S$ is isomorphic to either $2\,.\, U_6(2)\,.\, 2$ or ${2^{10}.\,M_{22}\,.\,2}$. But these groups contain no elements of order~$21$. Thus, $H=G$ and $\alpha^{\phantom{S}}_{S}(x)=3$.

$\bullet$ Let $(S,nX)=({Fi_{24}}',2D)$.
In this case, we have $\operatorname{m}(2D,2D,33A)=33$ and $\operatorname{m}(2D,33A,46A)=172322171820$. Thus, there are three elements in~$2D$ that generate a subgroup $H$ not being included in $S$, having order divisible by $11$ and $23$, and containing an element of order~$33$. Every maximal subgroup of~$G$ of order divisible by both $11$ and $23$ that is not included in $S$ is isomorphic to either $Fi_{23}\times 2$ or ${2^{12}.M_{24}}$, see \cite[Theorem~1.1 and Table~1.1]{LW}. But these groups contain no elements of order~$33$. Again, $H=G$ and $\alpha^{\phantom{S}}_{S}(x)=3$.

\medskip

The proof of $(i)$ is complete.

\medskip

$\bullet$ Let $(S,nX)$ be one of $(M_{22},2B)$, $(HS,2C)$, or $(Fi_{22},2D)$. In this case, as we mentioned in the introduction, $G=\operatorname{Aut} S$ is a $4$-transposition group with respect to the class $D=nX$. This can be verified by checking that $\operatorname{m}(x,x,y)\ne 0$ only when $y$ has order at most $4$. By \cite[Theorem 2]{RZ2}, we know that $3 \leqslant \alpha^{\phantom{S}}_S(x)\leqslant 4$. However, $3$-generated $4$-transposition groups have been classified, see \cite[Sections 3.1, 3.2]{KMS} and \cite[Theorem 3.4.2]{K}. This classification implies, in particular, that $S$ cannot be a composition factor of a $3$-generated $4$-transposition group as the only possible such factor is $L_2(7)$. This shows that $\alpha^{\phantom{S}}_S(x)=4$.

In the case $(S,nX)=(HS,2C)$, we can also give an alternative proof which does not rely on the classification of $3$-generated $4$-transposition groups, but uses Brauer's trick (Lemma \ref{uf}) instead. Assume to the contrary that $G=\operatorname{Aut} S$ is generated by some distinct $x_1,x_2,x_3\in 2C$. Denote $A=\langle x_1,x_2\rangle$ and $B=\langle x_3 \rangle$. Clearly, $A\cap B=1$. Hence, to obtain a contradiction by Lemma \ref{uf}, it suffices to show that 
\begin{equation}\label{brin}
 ( \chi_A, 1_A ) + ( \chi_B, 1_B ) >  \chi(1)
\end{equation}
for some nonprincipal irreducible ordinary character $\chi$ of $G$. We may take for $\chi$
a characters of degree $22$; namely, the one that takes positive values on the elements in~$2C$.
The required values of $\chi$ are as follows:

$$
\begin{tabular}{c|rrrrrr}
       &  1A & 2A & 2B & 3A & 4B & 2C \\
\hline       
$\chi$   &   22 &  6 & -2 & 4 &  2 & 8 
\end{tabular}
$$

We have
$$
( \chi_B, 1_B ) = \frac{1}{2}\big(\chi(1)+\chi(2C)\big) = \frac{1}{2}(22+8) =15
$$
and so (\ref{brin}) is equivalent to
\begin{equation}\label{brin2}
 ( \chi_A, 1_A ) > 7
\end{equation}

The possible conjugacy classes the product $x_1x_2$ belongs to are  $2A$, $2B$, $3A$, and $4B$, 
which follows from determining when $\operatorname{m}(2C,2C,nX)\ne 0$ for various $nX$.
We consider these cases separately.

If $x_1x_2\in 2A$ then $A\cong \mathbb{Z}_2\times \mathbb{Z}_2$ and 
$$
( \chi_A, 1_A ) = \frac{1}{4}\big(\chi(1)+\chi(2A)+2\chi(2C)\big) =\frac{1}{4}(22+6+2\cdot 8)=11 > 7.
$$

If $x_1x_2\in 2B$ then also $A\cong \mathbb{Z}_2\times \mathbb{Z}_2$ and 
$$
( \chi_A, 1_A ) = \frac{1}{4}\big(\chi(1)+\chi(2B)+2\chi(2C)\big) 
=\frac{1}{4}(22-2+2\cdot 8)= 9 > 7.
$$

If $x_1x_2\in 3A$ then $A\cong S_3$ and 
$$
( \chi_A, 1_A ) = \frac{1}{6}\big(\chi(1)+2\chi(3A)+3\chi(2C)\big) =\frac{1}{6}(22+2\cdot 4+3\cdot 8)=9 > 7.
$$

If $x_1x_2\in 4B$ then $A\cong D_8$. Observe that the squares of elements in $4B$ are in $2A$. Also, both classes of noncentral involutions in $D_8$ fuse to $2C$, because each generator $x_1$ and $x_2$ is in its own class. Therefore, we have
$$
( \chi_A, 1_A ) = \frac{1}{8}\big(\chi(1)+\chi(2A)+2\chi(4B)+4\chi(2C)\big) 
=\frac{1}{8}(22+6+2\cdot 2+4\cdot 8)=8 > 7.
$$
We see that in all cases inequality (\ref{brin2}) holds and so $\langle A,B\rangle$ is a proper subgroup of~$G$ by Lemma \ref{uf}, a contradiction.
\end{proof}

\section{Final remarks}

Both the original results by Guralnick and Saxl and their refinements find wide application. For example, in the proper paper \cite{GS}, the estimates on $\alpha^{\phantom{S}}_{S}(x)$ are used to classify bireflection groups.  Di\,Martino, Pellegrini, and Zalesskii used their results to study almost cyclic matrices in representations of sporadic groups. The recent paper~\cite{PSV} by I.\,N.\,Ponomarenko, S.\,V.\,Skresanov, and A.\,V.\,Va\-sil'ev uses the estimates from \cite{GS} to study the inheritance of properties of permutation groups by their $k$-closures. Finally, another area where the estimates on $\alpha^{\phantom{S}}_{S}(x)$ are used and play a key role is proving analogs of Baer--Suzuki theorems, see  \cite{FGG,Gu,GuestLevy,GGKP,GGKP1,GGKP2,RZ1,LWR,WGR,YWR,YWRV,YRV}. It is often necessary (see, e.\,g., \cite{RZ1,LWR,WGR,YWR,YWRV}) to refine the results of \cite{GS} for certain groups and automorphisms. The present work may be viewed as a contribution to finding such refinements.


\medskip

{\em Acknowledgement.\/}
The authors are thankful to Prof. Andrey\,S.\,Mamontov for helpful consultations.

\medskip

This is a preprint of the paper of the same title accepted for publication in the journal ``Siberian Electronic Mathematical Reports'' \big(© Sobolev Institute of Mathematics of SB RAS, 2025\big).

\end{document}